\documentclass{amsart}
\usepackage[utf8]{inputenc}
\usepackage[T1]{fontenc}

\usepackage{amsmath, amssymb, amsfonts, amsthm, mathtools, bm}
\usepackage{cases, amscd, stmaryrd, mathrsfs, yhmath}
\usepackage{upref}

\usepackage{graphicx}
\usepackage{paralist}

\usepackage{hyperref}
\hypersetup{
    colorlinks=true,
    linkcolor=red,      
    citecolor=red,      
    urlcolor=red        
}

\usepackage[misc]{ifsym}

\numberwithin{equation}{section}
\allowdisplaybreaks

\newtheorem{theorem}{Theorem}[section]

\newtheorem{proposition}[theorem]{Proposition}

\newtheorem{remark}[theorem]{Remark}

\newtheorem{propAlph}{Proposition}

\title[Analysis of a Floating Platform Coupled With a Beam]{Analysis of a Model for a Floating Platform Coupled with a Flexible Beam}
\author{Vicente Ocqueteau}
\address{Institut de Math\'ematiques de Bordeaux (IMB), Universit\'e de Bordeaux, 351, Cours de la Lib\'eration - F 33 405, France}
\email{vicente.ocqueteau@math.u-bordeaux.fr}

\subjclass[2020]{Primary: 76B15, 74F10, 47D06; Secondary: 35Q35, 35J05, 74K10.}
\keywords{Water wave equations, fluid-structure interaction, operator semigroup, infinite-dimensional system, Euler-Bernoulli beam, floating body.}

\date{\today}

\begin{document}
\begin{abstract}
We provide a rigorous mathematical analysis of a coupled system consisting of a floating platform in a fluid of finite depth, clamped to a flexible Euler-Bernoulli beam. The superstructure supports a rigid tip mass at its free end, resulting in a complex multi-physics interaction between potential flow, rigid-body dynamics, and elasticity. We derive the governing equations by coupling the linearised water-wave equations with the dynamics of the floating foundation and the tip-mass payload. The resulting system is formulated as an abstract Cauchy problem in an appropriate Hilbert space. By employing $C_0$-semigroup theory, we establish its well-posedness. Finally, we derive the exact physical energy balance and prove the energy conservation of the system.
\end{abstract}

\maketitle
\section{Introduction}\label{sec:introduction}
Many modern offshore applications share a common structural architecture consisting of a flexible beam clamped to a floating foundation. Such is the case for offshore floating flare towers, crane vessels, meteorological masts and, most notably, wind turbines \cite{Foussekis_2021, Li_FPSO_2026, Wayman_FTW_2006}.

In this work, we propose a complete linear model to describe these offshore configurations by considering a floating foundation coupled with a flexible superstructure. The superstructure consists of a flexible beam supporting a heavy top-side payload, a structural arrangement originally developed for aerospace applications and classically known as the SCOLE (Spacecraft Control Laboratory Experiment) model \cite{Littman_1988_Exact, Littman_1988_Stabilization}. While the structural dynamics of this configuration are well understood when clamped to the ground, its attachment to a floating body fundamentally changes the system's nature: the fluid-structure interaction leads to dynamic boundary conditions, where the motion of the beam's base is governed by the platform, which is in turn driven by the hydrodynamic pressure of the surrounding fluid. To properly capture this coupling and guarantee an exact energy balance, the kinematic and dynamic interface conditions must be precisely formulated.

The mathematical treatment of this problem lies at the intersection of two historically independent lines of research. The study of floating bodies traces back to the linear models by John \cite{John1949OnTM, John1950OnTM} and Ursell \cite{Ursell1949ONTH}. It was not until very recently that well-posedness for such models has been rigorously studied. More precisely, well-posedness for John's problem for a fixed body was proven in \cite{lannes2025posednessfjohnsfloating}, while the case of a freely floating body was established in \cite{lannes2025fjohnmodelcummins} and Ursell's model in \cite{ocqueteau2025initialvalueproblemdescribing}, all of them in the two-dimensional setting, to which we also limit our work. On the other hand, well-posedness of the structural appendage clamped to the ground was established in \cite{WeissZhao2010}. In this work, we bridge these areas by proposing a complete two-dimensional linear model that couples a floating platform with a flexible beam appendage, and providing a rigorous mathematical analysis of the resulting coupled system.

The remainder of this paper is organised as follows. Section \ref{sec:modelling} details the mathematical modelling of the fluid-structure interaction and derives the governing equations. In Section \ref{sec:main_results}, we state the main results of this work, namely the well-posedness and the exact energy balance of the coupled system. Section \ref{sec:operator_form} provides the rigorous functional framework. Finally, Section \ref{sec:proof_well-posendess} presents the proofs of the main results.

\section{Model Description}\label{sec:modelling}
In this section, we derive the governing equations for the fully coupled fluid-structure interaction, leading to the system \eqref{v_q_w_coupled}. We first present the hydrodynamic interaction between the fluid and the platform, and then integrate the mechanical coupling with the flexible superstructure.

\subsection{Coupling the Fluid and the Platform}
Let $\Omega\subset\mathbb{R}^2$ denote a fluid domain consisting of an infinite horizontal strip of finite depth. We decompose its boundary as $\partial\Omega=\mathcal{E}\cup\Gamma_{\rm w}\cup\Gamma_s$, where $\mathcal{E}$ is the free surface, $\Gamma_{\rm w}$ is the wetted surface of the floating object, and $\Gamma_s$ is the seabed. We assume that the seabed $\Gamma_s$ is a Lipschitz curve strictly separated from the free surface. Outside a compact set containing the floating body, we assume the seabed is horizontal. Furthermore, $\Gamma_{\rm w}$ is assumed to be a Lipschitz curve that is strictly separated from the seabed and intersects the free surface non-tangentially. In other words, the interior angles of the fluid domain at the contact points are strictly between $0$ and $\pi$.

We assume the fluid is inviscid and incompressible, so that its velocity field ${\bf U}$ is governed by the Euler equations. Furthermore, the flow is assumed to be irrotational, allowing us to express the velocity field as the gradient of a potential $\varphi:[0,\infty)\times\Omega\to\mathbb{R}$ such that 
\begin{equation}
{\bf U}(t,x,y)=\nabla\varphi(t,x,y)\qquad\qquad\left(t\geqslant 0,\begin{bmatrix}x \\ y\end{bmatrix} \in \Omega\right).
\end{equation}
The configuration of the centre of mass of the floating platform is described by the vector
\begin{equation}
    {\bf q}(t)=\begin{bmatrix}
        s(t) \\ h(t) \\ \theta(t)
    \end{bmatrix}\qquad\qquad(t\geqslant 0),
\end{equation}
where $s(t)$, $h(t)$, and $\theta(t)$ represent the degrees of freedom of the platform, namely surge, heave, and pitch, corresponding to its horizontal, vertical, and rotational motions, respectively. Let $\mathbf{M}_{\rm p} \in \mathbb{R}^{3 \times 3}$ denote the mass-inertia matrix of the platform with respect to the centre of mass ${\bf r}_G$, which is inherently symmetric and positive-definite. Furthermore, let ${\bf K} \in \mathbb{R}^{3 \times 3}$ denote the hydrostatic stiffness matrix, which is symmetric and positive definite for a stable equilibrium, as we shall assume.

We assume small oscillations around the equilibrium state, leading to the linearised water waves equations for the velocity potential $\varphi$  \cite{lannes2013water, zakharov1968stability}. Under this linearisation, the fluid domain $\Omega$ is fixed at its equilibrium position, and the free surface $\mathcal{E}$ reduces to a subset of the plane $\{y=0\}$. Similarly, the equation of motion for ${\bf q}$ can also be linearised around an equilibrium position. The resulting model coupling the velocity potential $\varphi$ and the floating platform ${\bf q}$ is based on the classical linear theory of John \cite{John1949OnTM} (see also \cite{lannes2025fjohnmodelcummins} for a modern mathematical treatment) and is formulated as
\begin{equation}\label{phi_q_coupled} 
\left\{
\begin{alignedat}{2}
    &\Delta\varphi(t,x,y) = 0 && \quad \left(t\geqslant 0, \begin{bmatrix}x\\y\end{bmatrix}\in\Omega\right)\\
    &\frac{\partial^2 \varphi}{\partial t^2}(t,x,0)+g\frac{\partial \varphi}{\partial y}(t,x,0)=0 && \quad \left(t\geqslant 0, x\in\mathcal{E}\right)\\
    &\frac{\partial \varphi}{\partial {\bf n}}(t,x,y)=\dot{\bf q}(t)\cdot\boldsymbol{\nu}(x,y) && \quad \left(t\geqslant 0, \begin{bmatrix}x\\ y\end{bmatrix}\in\Gamma_{\rm w}\right)\\
    &\frac{\partial \varphi}{\partial {\bf n}}(t,x,y)=0 && \quad \left(t\geqslant 0, \begin{bmatrix}x\\ y\end{bmatrix}\in \Gamma_s\right)\\
    &{\bf M}_{\rm p}\ddot{\bf q}(t)+{\bf K}{\bf q}(t)=-\rho_{\rm f}\int_{\Gamma_{\rm w}}\frac{\partial\varphi}{\partial t}\,\boldsymbol{\nu}\,{\rm d}\Gamma + {\bf F}_{\rm ext}(t)\quad && \quad (t\geqslant 0),
\end{alignedat}
\right.
\end{equation}
where $g$ is the gravitational acceleration constant, $\rho_{\rm f}>0$ is the fluid density, and ${\bf n}=[n_1,n_2]^T$ is the unit normal vector pointing outside $\Omega$. The term ${\bf F}_{\rm ext}(t)$ represents any additional external forces and moments applied directly to the floating foundation, such as those arising from mooring lines or control mechanisms. Furthermore, the vector $\boldsymbol{\nu}=[\nu_1,\nu_2,\nu_3]^T$ is defined as
\begin{equation}
    \boldsymbol{\nu}(x,y)=\begin{bmatrix}
        n_1(x,y) \\ 
        n_2(x,y) \\
        \left(\begin{bmatrix} x \\ y \end{bmatrix}-{\bf r}_G\right)^{\perp}\cdot {\bf n}(x,y)
    \end{bmatrix}\qquad\qquad\left(\begin{bmatrix} x \\ y \end{bmatrix} \in \Gamma_{\rm w}\right),
\end{equation}
where ${\bf r}_G=[0,y_G]^T$ is the centre of mass of the platform in its equilibrium position, and ${\bf v}^\perp = [-v_2, v_1]^T$ denotes the orthogonal for any vector ${\bf v}=[v_1,v_2]^T$.

\begin{remark}
    In the linearised model, the free surface $\mathcal{E}$ is a subset of the line $\{y=0\}$. Consequently, the boundary condition on $\mathcal{E}$ in the system \eqref{phi_q_coupled} involves the partial derivative with respect to $y$, which corresponds to the normal derivative in the general nonlinear setting. Moreover, we identify $\mathcal{E}$ with a subset of $\mathbb{R}$, so that we write $x\in\mathcal{E}$ instead of $[x,0]^T\in\mathcal{E}$.
\end{remark}

\begin{remark}\label{remark_phi_zeta}
    In the system \eqref{phi_q_coupled}, the first equation corresponds to the incompressibility condition in the Euler equations. The equation on $\mathcal{E}$ is the result of combining the kinematic condition and the dynamic condition (linearised Bernoulli's law):
    \begin{equation}\label{phi_zeta}
    \left\{
    \begin{alignedat}{2}
        &\frac{\partial \zeta}{\partial t}(t,x)-\frac{\partial \varphi}{\partial y}(t,x,0)=0 && \quad (t\geqslant 0, x\in\mathcal{E})\\
        &\frac{\partial \varphi}{\partial t}(t,x,0)+g\zeta(t,x)=0 &&
    \end{alignedat}
    \right.
    \end{equation}
    where $\zeta$ is the wave elevation with respect to the mean surface. The boundary conditions on $\Gamma_{\rm w}$ and $\Gamma_s$ mean that there is no fluid flowing from $\Omega$ through these solid interfaces (no-penetration conditions). Finally, the equation of motion for ${\bf q}$ corresponds to Newton's second law, where ${\bf K}{\bf q}(t)$ is the Archimedean restoring force applied to the body, the integral term is the force exerted by the hydrodynamic pressure $p_{\rm d} = -\rho_{\rm f} \frac{\partial \varphi}{\partial t}$ on $\Gamma_{\rm w}$, and all other external forces are included in ${\bf F}_{\rm ext}(t)$.
\end{remark}

The next aim is to reduce the boundary value problem for the water waves equations defined in $\Omega$, to a single equation on the free surface $\mathcal{E}$. To this end, we define
\begin{equation}
    v(t,x)=\varphi(t,x,0)\qquad\qquad(t\geqslant 0, x\in\mathcal{E}).
\end{equation}
Next, we decompose the potential into a \textit{Neumann} contribution $\Phi=[\phi_1, \phi_2, \phi_3]^T$ (the elementary Kirchhoff potentials) and a \textit{Dirichlet} contribution $D_\Omega v$ (the wave-induced potential for a fixed body) by writing
\begin{equation}\label{potential_decomp}
\varphi(t,x,y)=\dot{\bf q}(t)\cdot\Phi(x,y)+(D_\Omega v)(t,x,y)\qquad\qquad\left(t\geqslant 0, \begin{bmatrix}x\\y\end{bmatrix}\in\Omega\right),
\end{equation}
where each $\phi_i$ is the solution of
\begin{equation}\label{aux_potentials_pb}
\left\{
\begin{alignedat}{2}
    &\Delta\phi_i(x,y)=0 && \quad \left(\begin{bmatrix}x\\ y\end{bmatrix}\in\Omega\right)\\
    &\frac{\partial \phi_i}{\partial {\bf n}}(x,y)=\nu_i(x,y) && \quad \left(\begin{bmatrix}x\\ y\end{bmatrix}\in\Gamma_{\rm w}\right)\\
    &\frac{\partial \phi_i}{\partial {\bf n}}(x,y)=0 && \quad \left(\begin{bmatrix}x\\ y\end{bmatrix}\in\Gamma_s\right)\\
    &\phi_i(x,0)=0 && \quad \left(x\in\mathcal{E}\right)
\end{alignedat}
\right.
\end{equation}
and $D_\Omega$ is a Dirichlet lifting operator such that, for a suitable function $f:\mathcal{E}\to\mathbb{R}$ (or $\mathbb{C}$), the function $D_\Omega f:\Omega\to\mathbb{R}$ (or $\mathbb{C}$) solves
\begin{equation}\label{Dirichlet_op_pb}
\left\{
\begin{alignedat}{2}
    &\Delta(D_\Omega f)(x,y)=0 && \quad \left(\begin{bmatrix}x\\ y\end{bmatrix}\in\Omega\right)\\
    &\frac{\partial (D_\Omega f)}{\partial {\bf n}}(x,y)=0 && \quad \left(\begin{bmatrix}x\\ y\end{bmatrix}\in\Gamma_{\rm w}\right)\\
    &\frac{\partial (D_\Omega f)}{\partial {\bf n}}(x,y)=0 && \quad \left(\begin{bmatrix}x\\ y\end{bmatrix}\in\Gamma_s\right)\\
    &(D_\Omega f)(x,0)=f(x) && \quad \left(x\in\mathcal{E}\right).
\end{alignedat}
\right.
\end{equation}
Let 
\begin{equation}\label{informal_DtN}
    (\Lambda_\Omega f)(x):=\frac{\partial (D_\Omega f)}{\partial y}(x,0)\qquad\qquad(x\in\mathcal{E}).
\end{equation}

With \eqref{potential_decomp}, \eqref{Dirichlet_op_pb} and \eqref{informal_DtN}, we can recast \eqref{phi_q_coupled} strictly in terms of the free surface and the floating body dynamics:

\begin{equation}\label{v_q_coupled}
    \left\{
    \begin{alignedat}{2}
        &\frac{\partial^2 v}{\partial t^2}(t,x)+g(\Lambda_\Omega v)(t,x)=-g\,\dot{\bf q}(t)\cdot\frac{\partial \Phi}{\partial y}(x,0) && \quad \hspace{-1cm}\left(t\geqslant 0, x \in \mathcal{E}\right)\\
        &({\bf M}_{\rm p}+{\bf M}_{\rm a})\ddot{\bf q}(t)+{\bf K}{\bf q}(t)=-\rho_{\rm f}\int_{\Gamma_{\rm w}}\frac{\partial(D_\Omega v)}{\partial t}\,\boldsymbol{\nu}\,{\rm d}\Gamma +{\bf F}_{\rm ext}(t),
    \end{alignedat}
    \right.
\end{equation}
where
\begin{equation}
    {\bf M}_{\rm a} = \rho_{\rm f}\int_{\Gamma_{\rm w}} \boldsymbol{\nu} \Phi^T \,{\rm d}\Gamma
\end{equation}
is the so-called \textit{added mass} matrix. With this formulation, the primary unknowns for the floating platform model are $v$ and ${\bf q}$. Once the system is solved, the full potential $\varphi$ can be retrieved via \eqref{potential_decomp}. Moreover, the elevation $\zeta$ can also be recovered by \eqref{phi_zeta}.

\subsection{Coupling the Floating Platform and the Flexible Beam}
We consider a flexible beam of length $L>0$, clamped to the floating platform and featuring a point-mass at its tip. To maintain consistency with the fluid domain, where the mean free surface is located at $y=0$, we position the horizontal axis such that $x=0$ corresponds to the clamping point, and we denote its vertical coordinate by $y_0$. Consequently, the point-mass is located at $y_L = y_0 + L$. 

\begin{remark}
    By setting the clamping point at $x=0$, we implicitly assume that the beam is aligned with the horizontal centre of mass of the floating platform ($x_G = 0$). This is a crucial modelling assumption. If the beam were clamped at an eccentric position ($x_c \neq 0$), the vertical internal forces, such as the axial compression due to the weight and vertical inertia of the tower and point-mass, would exert a non-zero torque on the platform. This would strongly couple the rigid-body heave and pitch motions. Aligning the tower with the centre of buoyancy/mass ensures static stability and mathematically decouples these inertial effects.
\end{remark}

Let $w(t,y)$ denote the transverse (horizontal) deflection of the beam. In this work, we adopt the \textit{absolute formulation}, meaning $w(t,y)$ represents the total absolute displacement of the beam in the inertial frame, i.e., the sum of elastic deformation and the rigid-body motion dragged by the platform. Since we describe the beam in the inertial reference frame, no fictitious inertial forces appear in the governing equations.

The mechanical properties of the beam are defined by its mass density per unit length $\rho(y)$ and its flexural rigidity $EI(y)$, where $E$ is the Young's modulus and $I(y)$ is the area moment of inertia of the cross-section. To ensure a physically meaningful and well-posed formulation, both $\rho$ and $EI$ are assumed to be strictly positive functions of class $C^4([y_0,y_L])$. At its free end $y=y_L$, the beam supports a rigid point-mass (representing, for example, a flare tower payload, a crane vessel load, or a wind turbine rotor) with mass $m > 0$ and moment of inertia $J > 0$.

The internal forces and moments exerted by the platform on the beam at the clamping point are represented by the vector
\begin{equation}
    {\bf F}_{\rm int}[w](t)=\begin{bmatrix}
        \frac{\partial}{\partial y}\left(EI(y)\frac{\partial^2 w}{\partial y^2}(t,y)\right)\\[1.2em]
        0\\[1.2em]
        -EI(y)\frac{\partial^2 w}{\partial y^2}(t,y)+(y-y_G)\frac{\partial}{\partial y}\left(EI(y)\frac{\partial^2w}{\partial y^2}(t,y)\right)
    \end{bmatrix}_{y=y_0}.
\end{equation}
Physically, each component of ${\bf F}_{\rm int}[w]$ captures a specific interaction mechanism \cite{Morgul1990}. The first component is the shear force acting as a horizontal load (coupled with the surge motion). The second component vanishes, reflecting the assumption of an inextensible beam in the linearised model. The third component is the total torque, comprising the internal bending moment at the clamp plus the torque generated by the shear force acting at the lever arm $(y_0-y_G)$ (coupled with the pitch motion).

By Newton's third law, the corresponding back-action exerted by the beam on the rigid platform is exactly $-{\bf F}_{\rm int}[w](t)$. While the beam's back-action acts as an external force on the isolated platform in \eqref{v_q_coupled}, it constitutes an internal interface interaction for the coupled platform-beam system. By isolating the reaction $-{\bf F}_{\rm int}[w](t)$ from the remaining external forces ${\bf F}_{\rm ext}(t)$, the fully coupled system reads as follows:
\begin{equation}\label{v_q_w_coupled}
\mathmakebox[0.9\textwidth][l]{
\left\{
    \begin{aligned}
        &\frac{\partial^2 v}{\partial t^2}(t,x)+g(\Lambda_\Omega v)(t,x)=-g\,\dot{\bf q}(t)\cdot\frac{\partial \Phi}{\partial y}(x,0) && \hspace{-1.5cm}(t\geqslant 0, x \in \mathcal{E})\\
        &{\bf M}_{\rm T}\ddot{\bf q}(t)+{\bf K}{\bf q}(t)=-\rho_{\rm f}\int_{\Gamma_{\rm w}}\frac{\partial(D_\Omega v)}{\partial t}(t,\cdot,\cdot)\,\boldsymbol{\nu}\,{\rm d}\Gamma-{\bf F}_{\rm int}[w](t) + {\bf F}_{\rm ext}(t)\\
        &\rho(y)\frac{\partial^2 w}{\partial t^2}(t,y)+\frac{\partial ^2}{\partial y^2}\left(EI(y)\frac{\partial^2 w}{\partial y^2}(t,y)\right)= 0 &&\hspace{-1.5cm} (y\in(y_0,y_L))\\
        &w(t,y_0)={\bf r}(y_0)\cdot {\bf q}(t), \quad \frac{\partial w}{\partial y}(t,y_0)={\bf r}'(y_0)\cdot {\bf q}(t) \\
        &m\frac{\partial^2 w}{\partial t^2}(t,y_L)-\frac{\partial}{\partial y}\left(EI(y)\frac{\partial^2 w}{\partial y^2}(t,y)\right)\bigg|_{y=y_L}=F_{\rm tip}(t) \\
        &J\frac{\partial^3 w}{\partial y\partial t^2}(t,y_L)+EI(y_L)\frac{\partial^2 w}{\partial y^2}(t,y_L)=M_{\rm tip}(t)
    \end{aligned}
\right.
}
\end{equation}
where ${\bf M}_{\rm T}={\bf M}_{\rm p}+{\bf M}_{\rm a}$ is the total mass matrix of the platform and $F_{\rm tip}(t)$ and $M_{\rm tip}(t)$ represent external forces and torques applied to the tip mass, accounting for environmental loads (e.g., aerodynamic thrust on a wind turbine rotor) or passive and active control strategies.

The third equation in \eqref{v_q_w_coupled} is the classical Euler-Bernoulli model for the beam's transverse vibrations. The boundary conditions in the fourth line dictate the kinematic matching at the rigid clamp $y_0$ via the vector ${\bf r}(y)=[1,0,y-y_G]^T$. Finally, the fifth and sixth equations balance the linear and rotational inertial forces of the tip point-mass with the restoring shear force and bending moment, respectively \cite{Littman_1988_Exact}.

\begin{remark}\label{rem:notation}
To simplify the notation, partial derivatives with respect to time and spatial variables are denoted by subscripts when convenient.
\end{remark}

\section{Statement of the Main Results}\label{sec:main_results}

Before stating the main theorems regarding the well-posedness and energy conservation of the coupled system \eqref{v_q_w_coupled}, we first introduce the rigorous functional framework for the fluid-structure interface. 

Throughout this paper, $H^s(\mathcal{E})$ denotes the standard Sobolev space of order $s\in\mathbb{R}$ defined on the free surface $\mathcal{E}$. The expression \eqref{informal_DtN} for $\Lambda_\Omega$ provides a formal definition valid for smooth functions. However, to establish a rigorous functional framework, we need to extend this operator to appropriate Sobolev spaces. Thanks to the geometric conditions imposed on the wetted surface $\Gamma_{\rm w}$ in Section \ref{sec:modelling}, we have the following key properties.

\begin{propAlph}[{\cite[Corollary~2]{lannes2025posednessfjohnsfloating}}]\label{prop:DtN_properties}
   For every $f \in H^1(\mathcal{E})$, the system \eqref{Dirichlet_op_pb} admits a unique solution $D_\Omega f \in \dot{H}^1(\Omega) = \{ u \in L_{\rm loc}^1(\Omega) \mid \nabla u \in L^2(\Omega)^2 \}$. Moreover, the Dirichlet-to-Neumann map associated with \eqref{Dirichlet_op_pb}, defined by
   \begin{equation}\label{formal_DtN}
      \Lambda_\Omega f : x \mapsto \frac{\partial (D_\Omega f)}{\partial y}(x,0) \qquad (x \in \mathcal{E}),
   \end{equation}
   is well-defined, with $\Lambda_\Omega f \in L^2(\mathcal{E})$. Furthermore, $\Lambda_\Omega$ defines a self-adjoint, positive operator in $L^2(\mathcal{E})$ with domain $H^1(\mathcal{E})$.
\end{propAlph}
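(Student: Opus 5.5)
The plan has three stages: existence and uniqueness of $D_\Omega f$ by a variational argument in $\dot H^1(\Omega)$, identification of $\Lambda_\Omega$ through the associated Dirichlet form, and then the regularity statement that the form operator has domain exactly $H^1(\mathcal{E})$.

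\textbf{Existence of the lifting.} Given $f\in H^1(\mathcal{E})\subset H^{1/2}(\mathcal{E})$, I first build an extension $F\in\dot H^1(\Omega)$ with trace $f$ on $\mathcal{E}$. Away from the body, $\Omega$ is a flat strip, so I take a harmonic or Poisson-type extension cut off in $y$. Near the contact points, where the interior angle lies in $(0,\pi)$, I use the Lipschitz structure and bi-Lipschitz charts. I then seek $D_\Omega f=F+u$ with $u\in\dot H^1_0:=\{u\in\dot H^1(\Omega):u|_{\mathcal E}=0\}$, solving
\[
\int_\Omega\nabla u\cdot\nabla\psi=-\int_\Omega\nabla F\cdot\nabla\psi\qquad\text{for all }\psi\in\dot H^1_0 .
\]
The form $\|\nabla u\|_{L^2}$ is a norm on $\dot H^1_0$, because the Dirichlet condition on the unbounded set $\mathcal E$ removes constants. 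A Poincaré inequality on vertical segments of the strip, whose depth is bounded below, together with a local argument near the body, makes $\dot H^1_0$ a Hilbert space. Lax--Milgram then gives existence and uniqueness. The Neumann conditions on $\Gamma_{\rm w}\cup\Gamma_s$ are natural conditions of this weak formulation.

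\textbf{The DtN map as a form operator.} Define $\Lambda_\Omega f$ weakly by Green's formula:
\[
\langle \Lambda_\Omega f,g\rangle=\int_\Omega\nabla D_\Omega f\cdot\nabla D_\Omega g .
\]
This is a bounded, symmetric, nonnegative form on $H^{1/2}(\mathcal{E})$. It is closed in $L^2(\mathcal{E})$, since its form norm is equivalent to the $H^{1/2}$ norm (up to low frequencies, which $L^2$ controls). By the representation theorem for closed forms, it yields a self-adjoint nonnegative operator in $L^2(\mathcal{E})$. Positivity (injectivity) holds because a vanishing form forces $D_\Omega f$ to be constant, hence $f$ constant, and the only constant in $L^2$ of the unbounded set $\mathcal{E}$ is zero.

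\textbf{Main obstacle: the domain is exactly $H^1(\mathcal{E})$.} This means showing that $f\in H^1$ if and only if $\partial_y D_\Omega f|_{\mathcal E}\in L^2$. Far from the body this follows from Fourier analysis on the flat strip, where the symbol is $\xi\tanh(H\xi)$, of order one. Near the contact points, the nonsmooth corners make $H^2$ regularity fail. I would attack this step with Rellich-type identities: multiply by $\partial_x D_\Omega f$, or by a vector field transversal to $\mathcal E$, and integrate by parts. This should give
\[
\|\partial_y D_\Omega f\|_{L^2(\mathcal E)}\sim\|\partial_x f\|_{L^2(\mathcal E)}
\]
modulo lower-order terms, with the sign conditions on the vector field at $\Gamma_{\rm w}$ guaranteed by the non-tangential contact assumption (angles in $(0,\pi)$). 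These identities, established first on smooth approximations and then passed to the limit, identify the operator domain with $H^1(\mathcal{E})$. Since the statement is quoted from \cite[Corollary~2]{lannes2025posednessfjohnsfloating}, this Rellich step is exactly the content imported from that reference.
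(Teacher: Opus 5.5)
The paper gives no proof of this statement; it imports it verbatim from \cite[Corollary~2]{lannes2025posednessfjohnsfloating}. Your variational construction of $D_\Omega$ and your definition of $\Lambda_\Omega$ as the operator of a closed form on $H^{\frac12}(\mathcal{E})$ are correct, and you defer the only substantive step (that the operator domain is exactly $H^1(\mathcal{E})$ near the contact points) to that same reference, so your proposal and the paper rest on the same external input. Your Rellich-identity paragraph is a heuristic rather than a proof: the passage to the limit from smooth approximations is exactly where the non-tangential contact condition must be used, and that is the part you are borrowing.
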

Henceforth, $\mathbb{I}$ denotes the identity operator on the relevant Hilbert space, which will be clear from the context.

\begin{propAlph}[{\cite[Lemma 5.10]{ocqueteau2025initialvalueproblemdescribing}}]\label{lem:fractional_domain}
    Let $\Lambda_\Omega$ be the Dirichlet-to-Neumann operator defined above. The domain of its square root coincides with the fractional Sobolev space on the interface
    \begin{equation}
       D((\mathbb{I} + \Lambda_\Omega)^{\frac12}) = H^{\frac12}(\mathcal{E}),
    \end{equation}
    with equivalent norms.
\end{propAlph}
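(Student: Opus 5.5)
The plan is to identify the form domain of $\Lambda_\Omega$ with $H^{\frac12}(\mathcal{E})$ and then use the standard fact that, for a nonnegative self-adjoint operator, the domain of its square root is exactly the form domain.

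First, recall the Green identity for the Dirichlet problem \eqref{Dirichlet_op_pb}. For $f,g\in H^1(\mathcal{E})$, integrate by parts in $\Omega$ against $D_\Omega g$. The Neumann conditions on $\Gamma_{\rm w}$ and $\Gamma_s$ kill those boundary terms. The decay at horizontal infinity in the flat-bottom strip also has to be justified, by truncating and using $\nabla D_\Omega f\in L^2$. This gives
\begin{equation*}
\langle \Lambda_\Omega f, g\rangle_{L^2(\mathcal{E})}=\int_\Omega \nabla D_\Omega f\cdot\nabla D_\Omega g \,{\rm d}x\,{\rm d}y .
\end{equation*}
Hence the closed form associated with $\mathbb{I}+\Lambda_\Omega$ is $a(f,g)=\langle f,g\rangle_{L^2}+\int_\Omega\nabla D_\Omega f\cdot\nabla D_\Omega g$. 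By Kato's representation theorem, $D((\mathbb{I}+\Lambda_\Omega)^{\frac12})$ is the completion of $H^1(\mathcal{E})$ in the norm $a(f,f)^{1/2}$, and it is isometric for this norm. It therefore suffices to prove
\begin{equation*}
\|f\|_{L^2(\mathcal{E})}^2+\|\nabla D_\Omega f\|_{L^2(\Omega)}^2\simeq \|f\|_{H^{1/2}(\mathcal{E})}^2
\qquad (f\in H^1(\mathcal{E})).
\end{equation*}
Once this holds, density of $H^1(\mathcal{E})$ in $H^{1/2}(\mathcal{E})$ gives equality of the spaces with equivalent norms.

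\emph{Upper bound.} The energy $\|\nabla D_\Omega f\|_{L^2(\Omega)}$ minimises the Dirichlet integral among all $u\in\dot H^1(\Omega)$ with $u|_{\mathcal{E}}=f$, since the Neumann conditions are natural. So I would construct an explicit extension of $f$ with gradient controlled by $\|f\|_{H^{1/2}(\mathcal{E})}$. $\mathcal{E}$ is a finite union of intervals, namely two half-lines (plus possibly bounded pieces). I would extend $f$ to $H^{1/2}(\mathbb{R})$ and take a harmonic or Poisson-type extension into the lower half-plane, cut off near $y=0$. A naive extension of $f$ to $\mathbb{R}$ would put nonzero values under the body. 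To avoid this, I would use a bounded extension from $H^{1/2}(\mathcal{E})$ to $H^{1/2}$ on a Lipschitz curve consisting of $\mathcal{E}$ together with a curve hugging $\Gamma_{\rm w}$ from inside $\Omega$. The non-tangential contact angles (strictly in $(0,\pi)$) make the domain Lipschitz near the contact points, so Lipschitz trace/extension theorems apply locally.

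\emph{Lower bound.} I would use the trace theorem on a Lipschitz subdomain. Localise with a partition of unity into a bounded Lipschitz region near the body and the two flat-bottom strips far away. On each piece, $\|f\|_{\dot H^{1/2}}\lesssim \|\nabla u\|_{L^2}$ for any $u$ with trace $f$. The $L^2$ part is supplied separately by $\|f\|_{L^2}$, which handles the homogeneous-space issue. On the flat strips this is explicit by Fourier analysis: the energy behaves like $\xi\tanh(H\xi)|\hat f|^2$, which is comparable to $|\xi|$ for large $|\xi|$ and is dominated by $|\hat f|^2$ for small $|\xi|$.

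The main obstacle is the upper bound near the contact points. There the $H^{1/2}$ space on the half-line $\mathcal{E}$ involves no vanishing condition at the endpoint (it is $H^{1/2}$, not $H^{1/2}_{00}$), so extensions must not blow up at the corner. I would handle this by a local bi-Lipschitz flattening of the corner sector, with opening angle in $(0,\pi)$, onto a half-plane quadrant. I would then reflect $f$ evenly across the contact point and extend harmonically, using that even reflection is bounded on $H^{1/2}$ of a half-line.
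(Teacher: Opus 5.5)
Your argument is correct, but it is not the paper's route. The paper does not reprove this statement: it imports it from \cite{ocqueteau2025initialvalueproblemdescribing}, and the remark after it says the proof there is purely abstract. The operator $\mathbb{I}+\Lambda_\Omega$ is self-adjoint and strictly positive, and its domain is $H^1(\mathcal{E})$ with equivalent norms (Proposition~\ref{prop:DtN_properties} plus the closed graph theorem). Interpolation for positive self-adjoint operators then gives
\[
D\big((\mathbb{I}+\Lambda_\Omega)^{\frac12}\big)=\big[L^2(\mathcal{E}),D(\mathbb{I}+\Lambda_\Omega)\big]_{\frac12}=\big[L^2(\mathcal{E}),H^1(\mathcal{E})\big]_{\frac12}=H^{\frac12}(\mathcal{E}).
\]
In that route all the geometry is absorbed into the $H^1$-domain result, and the half-order statement costs one line. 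You instead identify the form domain directly, via the Green identity, the Dirichlet principle and trace/extension theorems on $\Omega$. That costs the localisation and Poincar\'e bookkeeping you sketch, but it is more robust. The operator domain enters only as a core. The equivalence $\|f\|_{L^2(\mathcal{E})}^2+\|\nabla D_\Omega f\|_{L^2(\Omega)}^2\simeq\|f\|_{H^{1/2}(\mathcal{E})}^2$ uses only the Lipschitz character of $\Omega$. The restriction of the contact angles to $(0,\pi)$ is what Proposition~\ref{prop:DtN_properties} relies on, not what the form domain needs.

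Two simplifications are available. First, the obstacle you flag in the upper bound is not real. You already note that the Neumann conditions on $\Gamma_{\rm w}$ and $\Gamma_s$ are natural, so the Dirichlet principle constrains only the trace on $\mathcal{E}$. Take a bounded extension of $f$ to $H^{\frac12}(\mathbb{R})$, for instance even reflection on each half-line glued with cutoffs (the two half-lines are a positive distance apart). Its Poisson extension into $\{y<0\}$, restricted to $\Omega\subset\{y<0\}$, is already an admissible competitor. Values on the segment beneath the body are never seen, and no flattening of the corners is needed.

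Second, the lower bound can be done globally. Use a vertical Poincar\'e inequality anchored at $f$ in the flat far field. Add a Friedrichs inequality on a bounded Lipschitz piece containing the body and the non-flat part of the bottom. Together these give $\|D_\Omega f\|_{H^1(\Omega)}\lesssim\|f\|_{L^2(\mathcal{E})}+\|\nabla D_\Omega f\|_{L^2(\Omega)}$. The trace theorem on $\Omega$ then concludes without any Fourier analysis on the strips.
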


\begin{remark}
    Although the operator $\Lambda_\Omega$ in \cite{ocqueteau2025initialvalueproblemdescribing} was associated with a specific fluid domain geometry, the proof given there relies entirely on abstract operator interpolation theory and the strict positivity of $\mathbb{I} + \Lambda_\Omega$. Consequently, the result is independent of the spatial configuration and applies directly to the more general geometry considered in this work.
\end{remark}

With the properties of the Dirichlet-to-Neumann operator established, we can now state our fundamental well-posedness result for the fully coupled system.

\begin{theorem}\label{well-posedness_v_q_w}
Let $v_0\in H^1(\mathcal{E})$, $v_1\in H^{\frac12}(\mathcal{E})$, ${\bf q}_0, {\bf q}_1\in\mathbb{C}^3$, $w_0\in H^4(y_0,y_L)$, and $w_1\in H^2(y_0,y_L)$. Consider the system \eqref{v_q_w_coupled} with initial conditions
    \begin{equation}\label{initial_cond_mapping}
    \left\{
    \begin{aligned}
        &v(0,\cdot)=v_0, \ \ v_t(0,\cdot)=v_1, \\ &{\bf q}(0)={\bf q}_0, \ \ \dot{\bf q}(0)={\bf q}_1, \\ 
        &w(0,\cdot)=w_0, \ \ w_t(0,\cdot)=w_1.
    \end{aligned}
    \right.
    \end{equation}
    Assume that these initial data satisfy the geometric compatibility conditions at the clamping point:
    \begin{equation}\label{strong_geometric_cond}
    \left\{
        \begin{aligned}
            &w_0(y_0) = \mathbf{r}(y_0)\cdot \mathbf{q}_0,  \quad w_0'(y_0) = \mathbf{r}'(y_0)\cdot \mathbf{q}_0, \\
            &w_1(y_0) = \mathbf{r}(y_0)\cdot \mathbf{q}_1,  \quad w_1'(y_0) = \mathbf{r}'(y_0)\cdot \mathbf{q}_1.
        \end{aligned}
    \right.
    \end{equation}
    Then, for any ${\bf F}_{\rm ext} \in C^1([0,\infty); \mathbb{C}^3)$ and $F_{\rm tip}, M_{\rm tip} \in C^1([0,\infty); \mathbb{C})$, the system admits a unique strong solution $(v,{\bf q}, w)$ satisfying
    \begin{equation}\label{strong_solution_spaces}
    \hspace{-0.9cm}
    \left\{
        \begin{aligned}
            &v\in C([0,\infty); H^1(\mathcal{E}))\cap C^1([0,\infty); H^{\frac12}(\mathcal{E}))\cap C^2([0,\infty); L^2(\mathcal{E})),\\
            &{\bf q}\in C^2([0,\infty); \mathbb{C}^3),\\
            &\mathrlap{w \in C([0,\infty); H^4(y_0,y_L))\cap C^1([0,\infty); H^2(y_0,y_L))\cap C^2([0,\infty); L^2(y_0,y_L)).}
        \end{aligned}
    \right.
    \end{equation}

    Moreover, if the initial data satisfy the lower regularity $v_0\in H^{\frac12}(\mathcal{E})$, $v_1\in L^2(\mathcal{E})$, $w_0\in H^2(y_0, y_L)$, and $w_1\in L^2(y_0,y_L)$, and only the initial position is constrained by
    \begin{equation}\label{weak_geometric_cond}
        w_0(y_0)=\mathbf{r}(y_0)\cdot \mathbf{q}_0, \quad w_0'(y_0)=\mathbf{r}'(y_0)\cdot \mathbf{q}_0,
    \end{equation}
    then for ${\bf F}_{\rm ext} \in L^1_{\rm loc}([0,\infty); \mathbb{C}^3)$ and $F_{\rm tip}, M_{\rm tip} \in L^1_{\rm loc}([0,\infty); \mathbb{C})$, the system \eqref{v_q_w_coupled} with initial conditions \eqref{initial_cond_mapping} admits a unique weak solution $(v,{\bf q}, w)$ satisfying
    \begin{equation}\label{weak_solution_spaces}
    \left\{
        \begin{aligned}
            &v\in C([0,\infty); H^{\frac12}(\mathcal{E}))\cap C^1([0,\infty); L^2(\mathcal{E})),\\
            &{\bf q}\in C^1([0,\infty); \mathbb{C}^3),\\
            &w \in C([0,\infty); H^2(y_0,y_L))\cap C^1([0,\infty); L^2(y_0,y_L)).
        \end{aligned}
    \right.
    \end{equation}
\end{theorem}

Relying on the regularity of the solutions provided by the previous result, we can establish the exact physical energy balance of the coupled system, which reads as follows.

\begin{theorem}\label{thm:energy}
    Let $(v, {\bf q}, w)$ be a weak solution of the coupled system \eqref{v_q_w_coupled}. Then the total physical energy of the system is
    \begin{equation}\label{physical_energy}
    \begin{aligned}
        E(t) &= \frac{\rho_{\rm f}}{2g} \|v_t\|_{L^2(\mathcal{E})}^2 + \frac12 \rho_{\rm f} \|\Lambda_\Omega^{\frac12} v\|_{L^2(\mathcal{E})}^2 \\
        &\quad + \frac12 \langle {\bf M}_{\rm T} \dot{\bf q}, \dot{\bf q} \rangle_{\mathbb{C}^3} + \frac12 \langle {\bf K} {\bf q}, {\bf q} \rangle_{\mathbb{C}^3} \\
        &\quad + \frac12 \|\sqrt{\rho} w_t\|_{L^2(y_0,y_L)}^2 + \frac12 \|\sqrt{EI} w_{yy}\|_{L^2(y_0,y_L)}^2 \\
        &\quad + \frac12 m|w_t(t, y_L)|^2 + \frac12 J |w_{yt}(t, y_L)|^2,
    \end{aligned}
    \end{equation}
    and it satisfies the balance equation
    \begin{equation}\label{energy_balance_formula}
    E(t) = E(0) + \int_0^t \mathrm{Re} \langle {\bf F}_{\rm ext}(\tau), \dot{\bf q}(\tau) \rangle_{\mathbb{C}^3} \, {\rm d}\tau + \int_0^t \mathrm{Re} \big[ F_{\rm tip}(\tau) \bar{w}_t(\tau, y_L) + M_{\rm tip}(\tau) \bar{w}_{yt}(\tau, y_L) \big] \, {\rm d}\tau.
\end{equation}

    In particular, if the system is unforced (${\bf F}_{\rm ext} \equiv {\bf 0}$, $F_{\rm tip} \equiv 0$, and $M_{\rm tip} \equiv 0$), the total physical energy is exactly conserved for all time $t \geqslant 0$.
\end{theorem}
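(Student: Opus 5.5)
We prove the balance \eqref{energy_balance_formula} first for strong solutions and then pass to weak solutions by density. Let $(v,{\bf q},w)$ be a strong solution as in Theorem \ref{well-posedness_v_q_w}, with data satisfying \eqref{strong_geometric_cond} and forcing in $C^1$. Its regularity \eqref{strong_solution_spaces} makes $E$ differentiable, and we compute $\dot E(t)$ term by term.

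\emph{Fluid part.} Take the $L^2(\mathcal{E})$ inner product of the first equation of \eqref{v_q_w_coupled} with $\frac{\rho_{\rm f}}{g}v_t$ and take real parts. Since $v\in H^1(\mathcal{E})=D(\Lambda_\Omega)$ and $v_t\in H^{\frac12}(\mathcal{E})$, Propositions \ref{prop:DtN_properties} and \ref{lem:fractional_domain} justify
\[
\mathrm{Re}\langle \Lambda_\Omega v, v_t\rangle = \tfrac12\tfrac{d}{dt}\|\Lambda_\Omega^{\frac12}v\|^2 .
\]
The right-hand side gives the coupling term $-\rho_{\rm f}\,\mathrm{Re}\,\dot{\bf q}\cdot\int_{\mathcal{E}}\partial_y\Phi(x,0)\,\bar v_t\,dx$. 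The key identity, which we expect to be the main obstacle, is that this cancels the hydrodynamic force term in the ${\bf q}$ equation:
\[
\int_{\mathcal{E}} \partial_y\phi_i(x,0)\,\bar f(x)\,dx=\int_{\Gamma_{\rm w}} \nu_i\, \overline{D_\Omega f}\,d\Gamma .
\]
We would obtain it by Green's second identity applied to $\phi_i$ and $D_\Omega f$ on $\Omega$: both are harmonic, the relevant Dirichlet and Neumann traces vanish on complementary parts of the boundary, and $\partial_{\bf n}=\partial_y$ on $\mathcal{E}$. Justifying this in the unbounded Lipschitz domain with corners requires care. Both functions lie in $\dot H^1(\Omega)$, and we use the weak (variational) form $\int_\Omega\nabla\phi_i\cdot\nabla\overline{D_\Omega f}$, computed both ways; for this we need the $\partial_y\phi_i(\cdot,0)$ pairing to make sense, for instance $\partial_y\Phi(\cdot,0)\in L^2(\mathcal{E})$ or at least in the dual of $H^{\frac12}$. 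With $f=v_t$, the identity shows that the fluid coupling terms cancel exactly.

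\emph{Platform part.} Pair the ${\bf q}$ equation with $\dot{\bf q}$ and take real parts. Since ${\bf M}_{\rm T}$ and ${\bf K}$ are symmetric, this yields $\frac{d}{dt}\big(\tfrac12\langle{\bf M}_{\rm T}\dot{\bf q},\dot{\bf q}\rangle+\tfrac12\langle{\bf K}{\bf q},{\bf q}\rangle\big)$. The symmetry of ${\bf M}_{\rm a}$ needed here follows from the same Green identity applied to $\phi_i,\phi_j$. The right-hand side contributes $-\mathrm{Re}\langle{\bf F}_{\rm int}[w],\dot{\bf q}\rangle$, the forcing term $\mathrm{Re}\langle{\bf F}_{\rm ext},\dot{\bf q}\rangle$, and the hydrodynamic term already cancelled above.

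\emph{Beam part.} Multiply the beam equation by $\bar w_t$ and integrate by parts twice in $y$. This produces $\frac{d}{dt}\big(\tfrac12\|\sqrt\rho w_t\|^2+\tfrac12\|\sqrt{EI}w_{yy}\|^2\big)$ plus the boundary terms $[(EIw_{yy})_y\bar w_t - EIw_{yy}\bar w_{yt}]_{y_0}^{y_L}$. At $y_L$, the tip equations convert these terms into $-\frac{d}{dt}\big(\tfrac m2|w_t|^2+\tfrac J2|w_{yt}|^2\big)$ plus the forcing $\mathrm{Re}[F_{\rm tip}\bar w_t+M_{\rm tip}\bar w_{yt}]$. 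At $y_0$, we differentiate the clamp conditions in time to get $w_t(y_0)={\bf r}(y_0)\cdot\dot{\bf q}=\dot s+(y_0-y_G)\dot\theta$ and $w_{yt}(y_0)=\dot\theta$. The boundary terms there become exactly $\mathrm{Re}\langle{\bf F}_{\rm int}[w],\dot{\bf q}\rangle$, which cancels the platform contribution. This is a direct check against the definition of ${\bf F}_{\rm int}$, up to sign bookkeeping.

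Summing the three parts gives $\dot E$ equal to the power of the external forces; integrating from $0$ to $t$ yields \eqref{energy_balance_formula} for strong solutions. For weak solutions, we approximate the data satisfying \eqref{weak_geometric_cond} by strong data satisfying \eqref{strong_geometric_cond}, and the forcing in $L^1_{\rm loc}$ by $C^1$ forcing. The semigroup framework of Theorem \ref{well-posedness_v_q_w} gives convergence of solutions in $C([0,T];\mathcal{H})$, where $\mathcal{H}$ is the energy space. By Proposition \ref{lem:fractional_domain} and trace embeddings in $H^2(y_0,y_L)$, $E$ is continuous on $\mathcal{H}$, and the work integrals converge because $\dot{\bf q}$, $w_t(\cdot,y_L)$ and $w_{yt}(\cdot,y_L)$ converge uniformly on $[0,T]$. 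Setting all forcing to zero gives conservation.

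Two points remain to be checked. First, the tip values $w_t(y_L)$ and $w_{yt}(y_L)$ must be part of the state, so that they converge uniformly; this presumably holds because the state space carries them as separate components. Second, the density of strong data satisfying the velocity compatibility conditions in \eqref{strong_geometric_cond} inside the weak data space must hold, which should follow from the density of the generator's domain.
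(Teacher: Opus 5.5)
Your plan follows the paper's proof step for step. You use energy multipliers for classical solutions ($\frac{\rho_{\rm f}}{g}\bar v_t$, $\dot{\bf q}$, $\bar w_t$) and a Green identity to cancel the fluid--platform coupling. You integrate by parts twice in $y$, using the tip equations and the time-differentiated clamp conditions to cancel ${\bf F}_{\rm int}[w]$. Then you pass to mild solutions by density. Your extra remarks are correct and fill details the paper leaves implicit: the symmetry of ${\bf M}_{\rm a}$ via Green's identity with $\phi_i,\phi_j$, and approximating the forcing in $L^1_{\rm loc}$. Both points you leave open are fine. The tip velocities are the components $\tilde\xi,\tilde\eta$ of the $H$-part of the state, and $D(\mathcal{A})=D(\mathcal{A}_0)$ is dense in $X$.

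There is, however, a sign error in your key identity, and as written it breaks the cancellation you claim. Compute $\int_\Omega \nabla\phi_i\cdot\nabla\overline{D_\Omega f}$ both ways. Via $D_\Omega f$ it equals $\int_{\partial\Omega}\phi_i\,\partial_{\bf n}\overline{D_\Omega f}\,{\rm d}\Gamma=0$, because $\phi_i=0$ on $\mathcal{E}$ and $\partial_{\bf n}D_\Omega f=0$ on $\Gamma_{\rm w}\cup\Gamma_s$. Via $\phi_i$ it equals $\int_{\mathcal{E}}\partial_y\phi_i\,\bar f\,{\rm d}x+\int_{\Gamma_{\rm w}}\nu_i\,\overline{D_\Omega f}\,{\rm d}\Gamma$. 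Hence
\[
\int_{\Gamma_{\rm w}}\nu_i\,\overline{D_\Omega f}\,{\rm d}\Gamma=-\int_{\mathcal{E}}\partial_y\phi_i(x,0)\,\bar f(x)\,{\rm d}x,
\]
which is the identity the paper uses. With your sign, the platform's hydrodynamic contribution $-\rho_{\rm f}\,\mathrm{Re}\sum_i \dot q_i\int_{\Gamma_{\rm w}}\nu_i\overline{D_\Omega v_t}\,{\rm d}\Gamma$ would equal the fluid's coupling term $-\rho_{\rm f}\,\mathrm{Re}\,\dot{\bf q}\cdot\int_{\mathcal{E}}\Phi_y(x,0)\bar v_t\,{\rm d}x$. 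The two would then add, not cancel. The minus sign is exactly what makes the sum vanish. The derivation you describe produces the correct sign, so this is a slip rather than a missing idea, but the displayed identity needs to be corrected for the argument to go through.
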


\section{Operator Form of the Governing Equations}\label{sec:operator_form}

Our strategy to prove the well-posedness of \eqref{v_q_w_coupled} is to formulate it as a first-order abstract Cauchy problem $\dot{z}(t) = \mathcal{A}z(t)$ on a suitable Hilbert space $X$. To this end, we first define a strictly positive, self-adjoint operator $A$ on a Hilbert space $H$ that incorporates the free-surface water waves, the restoring forces, the structural coupling between the platform and the beam and the tip point-mass dynamics. We then construct a skew-adjoint generator $\mathcal{A}_0$, and finally treat the remaining hydrodynamic interaction terms as a bounded perturbation $\mathcal{P} \in \mathscr{L}(X)$ such that $\mathcal{A} = \mathcal{A}_0 + \mathcal{P}$.

Let $H$ be the Hilbert space defined by
\begin{equation}
    H = \underbrace{L^2(\mathcal{E}) \vphantom{L^2(y_0,y_L)}}_{\mathclap{\text{Fluid} \vphantom{p}}} \times 
        \underbrace{\mathbb{C}^3 \vphantom{L^2(y_0,y_L)}}_{\mathclap{\text{Platform} \vphantom{p}}} \times 
        \underbrace{L^2(y_0,y_L)\times\mathbb{C}\times\mathbb{C}}_{\mathclap{\text{Beam and point-mass} \vphantom{p}}}
\end{equation}
We endow $H$ with the following weighted inner product:
\begin{equation}\label{inner_product_H}
\begin{aligned}
    \left\langle \begin{bmatrix}v \\ \mathbf{q} \\ w \\ \xi \\ \eta \end{bmatrix}, \begin{bmatrix}\tilde v \\ \tilde{\mathbf{q}} \\ \tilde w \\ \tilde \xi \\ \tilde \eta\end{bmatrix}\right\rangle_H 
    &= \rho_{\rm f} \langle v, \tilde{v} \rangle_{L^2(\mathcal{E})} + g \langle \mathbf{M}_{\rm T} \mathbf{q}, \tilde{\mathbf{q}} \rangle_{\mathbb{C}^3} \\
    &\quad + g \langle \rho w, \tilde{w} \rangle_{L^2(y_0,y_L)} + g m \xi \bar{\tilde{\xi}} + g J \eta \bar{\tilde{\eta}}.
\end{aligned}
\end{equation}

To introduce the rigorous physical coupling, we recall the kinematic influence vector $\mathbf{r}(y) = [1, 0, y - y_G]^T$. We now define the operator $A:D(A) \subset H \to H$. To ensure $A$ is self-adjoint with respect to \eqref{inner_product_H}, its domain must incorporate the absolute geometric compatibility conditions between the platform, the beam, and the tip point-mass:
\begin{equation}
    D(A) = \left\{ \left.\begin{bmatrix}v \\ \mathbf{q} \\ w \\ \xi \\ \eta\end{bmatrix} \in H^1(\mathcal{E}) \times \mathbb{C}^3 \times H^4(y_0,y_L) \times \mathbb{C} \times \mathbb{C} \ \ \right| \ \ \begin{aligned} 
    &\xi = w(y_L)\\ 
    &\eta = w'(y_L) \\  
    &w(y_0)= \mathbf{r}(y_0) \cdot \mathbf{q}\\
    &w'(y_0)= \mathbf{r}'(y_0) \cdot \mathbf{q} \end{aligned} \right\}.
\end{equation}
The action of $A$ is given by
\begin{equation}
     A \begin{bmatrix} v \\ \mathbf{q} \\ w \\ \xi \\ \eta \end{bmatrix} = \begin{bmatrix} 
        g\Lambda_{\Omega}v + v \\ 
        \mathbf{M}_{\rm T}^{-1} \left( {\bf K} \mathbf{q} + {\bf F}_{\rm int}[w] \right)\\ 
        \rho^{-1} (EI w_{yy})_{yy} \\ 
        -m^{-1} (EI w_{yy})_y(y_L) \\ 
        J^{-1}(EI w_{yy})(y_L)
     \end{bmatrix}.
\end{equation}

\begin{proposition}\label{prop_A_self_adjoint}
    The operator $A: D(A) \subset H \to H$ is self-adjoint and strictly positive. 
\end{proposition}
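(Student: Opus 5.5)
The plan is in three steps: establish symmetry through a Green-type identity, derive strict positivity from the resulting quadratic form, and obtain self-adjointness by proving that $A$ is bijective from $D(A)$ onto $H$. A symmetric operator that is onto $H$ is self-adjoint, so the third step closes the argument. Throughout, I use that ${\bf M}_{\rm T}={\bf M}_{\rm p}+{\bf M}_{\rm a}$ is symmetric and positive definite, so that \eqref{inner_product_H} is an inner product equivalent to the standard one. This holds because the added mass ${\bf M}_{\rm a}$ is the Gram matrix $\rho_{\rm f}\int_\Omega \nabla\phi_i\cdot\nabla\phi_j$ of the Kirchhoff potentials: write $\nu_i=\partial_{\bf n}\phi_i$ in the definition of ${\bf M}_{\rm a}$ and apply Green's formula, using $\phi_j=0$ on $\mathcal{E}$ and $\partial_{\bf n}\phi_i=0$ on $\Gamma_s$. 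Hence ${\bf M}_{\rm a}$ is symmetric and positive semidefinite.

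\textbf{Symmetry.} For $z=(v,{\bf q},w,\xi,\eta)$ and $\tilde z$ in $D(A)$, I compute $\langle Az,\tilde z\rangle_H$ and integrate $\int (EIw_{yy})_{yy}\bar{\tilde w}$ by parts twice.
\begin{itemize}
\item At $y=y_L$, the boundary terms are $(EIw_{yy})_y(y_L)\bar{\tilde\xi}-EIw_{yy}(y_L)\bar{\tilde\eta}$, using $\tilde\xi=\tilde w(y_L)$ and $\tilde\eta=\tilde w'(y_L)$. These cancel exactly against the tip components of $Az$ weighted by $gm$ and $gJ$.
\item At $y=y_0$, I use $\tilde w(y_0)={\bf r}(y_0)\cdot\tilde{\bf q}$ and $\tilde w'(y_0)={\bf r}'(y_0)\cdot\tilde{\bf q}$, with ${\bf r}(y_0)=[1,0,y_0-y_G]^T$ and ${\bf r}'=[0,0,1]^T$. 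The boundary terms become $-(EIw_{yy})_y(y_0)\,(\bar{\tilde q}_1+(y_0-y_G)\bar{\tilde q}_3)+EIw_{yy}(y_0)\bar{\tilde q}_3$. This is exactly $-\langle {\bf F}_{\rm int}[w],\tilde{\bf q}\rangle$, so it cancels the platform term $g\langle{\bf F}_{\rm int}[w],\tilde{\bf q}\rangle$.
\end{itemize}
Combined with Proposition \ref{prop:DtN_properties} for the fluid part, this gives
\begin{equation*}
\langle Az,\tilde z\rangle_H=\rho_{\rm f} g\langle \Lambda_\Omega^{\frac12}v,\Lambda_\Omega^{\frac12}\tilde v\rangle+\rho_{\rm f}\langle v,\tilde v\rangle+g\langle{\bf K}{\bf q},\tilde{\bf q}\rangle+g\int_{y_0}^{y_L}EI\,w_{yy}\bar{\tilde w}_{yy}\,{\rm d}y .
\end{equation*}
The right-hand side is Hermitian, so $A$ is symmetric.

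\textbf{Strict positivity.} I need $\langle Az,z\rangle_H\ge c\|z\|_H^2$ for some $c>0$. The fluid part already dominates $\rho_{\rm f}\|v\|^2$, and ${\bf K}>0$ controls $|{\bf q}|^2$, hence also $\langle {\bf M}_{\rm T}{\bf q},{\bf q}\rangle$. For the beam, I write
\begin{equation*}
w(y)=w(y_0)+w'(y_0)(y-y_0)+\int_{y_0}^{y}(y-s)\,w''(s)\,{\rm d}s ,
\end{equation*}
where $w(y_0)$ and $w'(y_0)$ are bounded by $C|{\bf q}|$ through the clamp conditions. This yields $\|w\|_{L^\infty}+\|w'\|_{L^\infty}\le C(|{\bf q}|+\|w''\|_{L^2})$. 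Since $EI$ is bounded below, $\|w''\|_{L^2}^2\le C\int EI|w_{yy}|^2$. This controls $\int\rho|w|^2$, $|\xi|^2=|w(y_L)|^2$ and $|\eta|^2=|w'(y_L)|^2$. I expect this Poincaré-type step to be the only delicate point: positivity has to come from the platform stiffness ${\bf K}$ transmitted through the clamp, since the beam energy alone vanishes on rigid motions.

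\textbf{Surjectivity.} Given $f=(f_1,{\bf f}_2,f_3,f_4,f_5)\in H$, I solve $Az=f$ explicitly, in the following order.
\begin{enumerate}
\item Solve $(g\Lambda_\Omega+\mathbb{I})v=f_1$. This has a unique solution $v\in H^1(\mathcal{E})$, because $\Lambda_\Omega$ is self-adjoint and positive with domain $H^1(\mathcal{E})$ by Proposition \ref{prop:DtN_properties}.
\item Set $\sigma=EIw_{yy}$. It solves $\sigma''=\rho f_3$ with $\sigma'(y_L)=-mf_4$ and $\sigma(y_L)=Jf_5$. This terminal-value problem determines $\sigma\in H^2$ uniquely, independently of ${\bf q}$.
\item Since ${\bf F}_{\rm int}[w]$ depends only on $\sigma(y_0)$ and $\sigma'(y_0)$, it is now known. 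I set ${\bf q}={\bf K}^{-1}({\bf M}_{\rm T}{\bf f}_2-{\bf F}_{\rm int}[w])$.
\item Recover $w$ by integrating $w''=\sigma/EI$ twice, with initial data $w(y_0)={\bf r}(y_0)\cdot{\bf q}$ and $w'(y_0)={\bf r}'(y_0)\cdot{\bf q}$.
\item Set $\xi=w(y_L)$ and $\eta=w'(y_L)$.
\end{enumerate}
For regularity, $EI\in C^4$ is strictly positive and $\sigma\in H^2$, so $w''\in H^2$ and hence $w\in H^4$. Therefore $z\in D(A)$ and $A$ is onto $H$. Together with symmetry, this makes $A$ self-adjoint, and positivity makes it strictly positive with $0\in\rho(A)$.
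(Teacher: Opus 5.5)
Your proof is correct, and for surjectivity and positivity it takes a different route from the paper.

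\textbf{The paper's route.} The paper works variationally. It introduces the constrained space $V\subset H^{\frac12}(\mathcal{E})\times\mathbb{C}^3\times H^2(y_0,y_L)\times\mathbb{C}\times\mathbb{C}$ and the form $a$. It proves the coercivity $a(U,U)\geqslant\alpha\|U\|_V^2$, using the same integration from the clamp that you use together with Proposition \ref{lem:fractional_domain} for the fluid component. It then solves $a(U,\tilde U)=\langle F,\tilde U\rangle_H$ by Lax--Milgram and recovers $w\in H^4$, $v\in H^1(\mathcal{E})$ and the platform and tip equations by testing and integrating by parts. Symmetry and positivity are read off from $\langle AU,\tilde U\rangle_H=a(U,\tilde U)$.

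\textbf{What you do differently.} Your symmetry computation is essentially that same integration-by-parts identity. For surjectivity, however, you exploit the cascade structure of $AU=F$. The fluid equation decouples. The quantity $\sigma=EIw_{yy}$ is fixed by $\sigma''=\rho f_3$ and the tip data, independently of ${\bf q}$. Then ${\bf q}$ follows algebraically through ${\bf K}^{-1}$, and $w$ follows by two integrations from the clamp. This is more elementary: it exhibits the inverse and the $H^4$ regularity explicitly, without Lax--Milgram or Proposition \ref{lem:fractional_domain}. You also justify, via the Gram-matrix representation of ${\bf M}_{\rm a}$, that ${\bf M}_{\rm T}$ is symmetric positive definite, which the paper uses in \eqref{inner_product_H} without comment.

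\textbf{What the paper's route buys.} It gives coercivity of $a$ in the stronger $V$-norm, not just your lower bound in the $H$-norm. That stronger bound underlies the subsequent claims $D(A^{\frac12})=V$ and $\|A^{\frac12}U\|_H^2=a(U,U)$, which are used to define $X$ and \eqref{norm_X}. It also carries over to couplings where the static problem cannot be solved by hand.
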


\begin{proof}
    In view of Proposition \ref{lem:fractional_domain}, the natural space for the trace of the velocity potential on the interface is the fractional Sobolev space $H^{\frac12}(\mathcal{E})$. This motivates the introduction of the auxiliary space $V$, defined as the closed subspace of the product space $H^{\frac12}(\mathcal{E}) \times \mathbb{C}^3 \times H^2(y_0,y_L)\times\mathbb{C}\times\mathbb{C}$ given by
    \begin{equation}\label{V_space}
        V = \left\{\left.\begin{bmatrix}v \\ {\bf q} \\ w \\ \xi \\ \eta\end{bmatrix} \in H^{\frac12}(\mathcal{E}) \times \mathbb{C}^3 \times H^2(y_0,y_L)\times\mathbb{C}\times\mathbb{C} \ \ \right| \ \ \begin{aligned} 
            &\xi = w(y_L) \\ 
            &\eta = w'(y_L)\\
            &w(y_0)= \mathbf{r}(y_0) \cdot {\bf q}\\
            &w'(y_0)= \mathbf{r}'(y_0) \cdot {\bf q}
        \end{aligned} \right\}.
    \end{equation}
    Equipped with the norm inherited from this product, $V$ is a Hilbert space that is continuously and densely embedded in $H$. On this space, we define the symmetric bilinear form $a: V \times V \to \mathbb{C}$ as
    \begin{equation}
    \begin{aligned}
        a(U, \tilde{U}) &= \rho_{\rm f} g \langle \Lambda_{\Omega}^{\frac12} v, \Lambda_{\Omega}^{\frac12} \tilde{v} \rangle_{L^2(\mathcal{E})} + \rho_{\rm f} \langle v, \tilde{v} \rangle_{L^2(\mathcal{E})} \\
        &\quad+ g \langle {\bf K} {\bf q}, \tilde{\bf q} \rangle_{\mathbb{C}^3} + g \langle EI w_{yy}, \tilde{w}_{yy} \rangle_{L^2(y_0,y_L)}.
    \end{aligned}
    \end{equation}
    We will show that $a(\cdot, \cdot)$ is a continuous and strictly coercive form on $V$.

    The continuity of the form follows directly from the Cauchy-Schwarz inequality, as each term is bounded by the corresponding product of norms in $H^{\frac12}(\mathcal{E})$, $\mathbb{C}^3$, and $H^2(y_0, y_L)$. To establish coercivity, we show that $a(U,U)$ controls the full $H^2$-norm of $w$. Integrating from the clamped boundary $y_0$ and using the kinematic conditions, we have:
    \begin{equation}
        w_y(y) = \mathbf{r}'(y_0) \cdot {\bf q} + \int_{y_0}^y w_{yy}(\sigma) \, {\rm d}\sigma, \quad w(y) = \mathbf{r}(y_0) \cdot {\bf q} + \int_{y_0}^y w_y(\sigma) \, {\rm d}\sigma.
    \end{equation}
    Applying the Cauchy-Schwarz inequality, we obtain the following estimates:
    \begin{equation}
    \begin{aligned}
        \|w_y\|_{L^2(y_0,y_L)}^2 &\leqslant C_1 |{\bf q}|^2 + C_2 \|w_{yy}\|_{L^2(y_0,y_L)}^2, \\
        \|w\|_{L^2(y_0,y_L)}^2 &\leqslant C_3 |{\bf q}|^2 + C_4 \|w_{yy}\|_{L^2(y_0,y_L)}^2.
    \end{aligned}
    \end{equation}
    These inequalities ensure that the $H^2$-norm of $w$ is bounded by the state variables already present in $a(U,U)$. Moreover, the continuity of the trace operator from $H^2(y_0, y_L)$ to $\mathbb{C}$ implies 
    \begin{equation}
|\xi|^2 + |\eta|^2 \leqslant C_5 \|w\|_{H^2(y_0,y_L)}^2.
\end{equation}
Since $D((\mathbb{I}+\Lambda_\Omega)^{\frac12}) = H^{\frac12}(\mathcal{E})$ (Proposition \ref{lem:fractional_domain}) and ${\bf K}$ is positive definite, there exists $\alpha > 0$ such that $a(U,U) \geqslant \alpha \|U\|_V^2$.
    
   These properties establish that $a(\cdot, \cdot)$ is a continuous and strictly coercive symmetric bilinear form on the Hilbert space $V$. Since $V$ is continuously embedded in $H$, the mapping $\tilde{U} \mapsto \langle F, \tilde{U} \rangle_H$ defines a bounded linear functional on $V$ for any given $F = [f_v, f_{\bf q}, f_w, f_\xi, f_\eta]^T \in H$. Thus, the Lax-Milgram theorem guarantees that there exists a unique $U \in V$ such that
    \begin{equation} \label{eq:var_equality}
        a(U, \tilde{U}) = \langle F, \tilde{U} \rangle_H \qquad\qquad (\tilde{U} \in V).
    \end{equation}
    
    To show that this unique solution $U$ actually belongs to the operator domain $D(A)$ and that $AU = F$, we recover the spatial regularity by restricting \eqref{eq:var_equality} to specific test functions. Choosing $\tilde{U} = [0, {\bf 0}, \tilde{w}, 0, 0]^T$ with $\tilde{w} \in C_c^\infty(y_0, y_L)$ yields   
    \begin{equation}
        g \langle EI w_{yy}, \tilde{w}_{yy} \rangle_{L^2(y_0,y_L)} = \langle f_w, \tilde{w} \rangle_{L^2(y_0,y_L)}. 
    \end{equation} 
    This implies $g(EI w_{yy})_{yy} = f_w \in L^2(y_0, y_L)$ in the sense of distributions, which guarantees $w \in H^4(y_0, y_L)$. Similarly, choosing $\tilde{U} = [\tilde{v}, {\bf 0}, 0, 0, 0]^T$ gives 
    \begin{equation}
        \rho_{\rm f} \langle g\Lambda_{\Omega} v + v, \tilde{v} \rangle_{L^2(\mathcal{E})} = \rho_{\rm f} \langle f_v, \tilde{v} \rangle_{L^2(\mathcal{E})}, 
    \end{equation}
    forcing $g\Lambda_{\Omega} v + v = f_v \in L^2(\mathcal{E})$. Because $g\Lambda_{\Omega} + \mathbb{I}$ is a strictly positive, self-adjoint operator on $L^2(\mathcal{E})$ with domain $H^1(\mathcal{E})$, it is a bijective mapping from $H^1(\mathcal{E})$ onto $L^2(\mathcal{E})$. This strictly guarantees that $v \in H^1(\mathcal{E})$.

    With the $H^4$-regularity of $w$ established, integration by parts over $(y_0, y_L)$ is now justified for a general $\tilde{U} \in V$. Using the kinematic constraints dictated by $V$, we substitute the integrated beam term back into \eqref{eq:var_equality}:
    \begin{equation}
    \begin{aligned}
        &\langle g {\bf K} {\bf q} - g(EI w_{yy})_y(y_0)\mathbf{r}(y_0) + g(EI w_{yy})(y_0)\mathbf{r}'(y_0), \tilde{\bf q} \rangle_{\mathbb{C}^3} \\
        &\quad + g(EI w_{yy})_y(y_L) \bar{\tilde{\xi}} - g(EI w_{yy})(y_L) \bar{\tilde{\eta}} = \langle f_{\bf q}, \tilde{\bf q} \rangle_{\mathbb{C}^3} + f_\xi \bar{\tilde{\xi}} + f_\eta \bar{\tilde{\eta}}.
    \end{aligned}
    \end{equation}
    Since $\tilde{\bf q}$, $\tilde{\xi}$, and $\tilde{\eta}$ can be chosen arbitrarily and independently in $\mathbb{C}^3 \times \mathbb{C} \times \mathbb{C}$, the boundary terms must match the right-hand side component-wise. This recovers exactly the dynamic boundary conditions defined in $D(A)$, proving that $U \in D(A)$ and $AU = F$. 
    
    Moreover, the relation $\langle AU, \tilde{U} \rangle_H = a(U, \tilde{U})$ holds for all $U \in D(A)$ and $\tilde{U} \in V$. The symmetry of the bilinear form $a(\cdot, \cdot)$ therefore directly implies that $\langle AU, \tilde{U} \rangle_H = \langle U, A\tilde{U} \rangle_H$ for all $U, \tilde{U} \in D(A)$, meaning $A$ is a symmetric operator. Since we have also shown that for every $F \in H$ there is a unique $U \in D(A)$ satisfying $AU = F$, the operator $A$ is surjective. Thus, $A$ is self-adjoint. Finally, the strict positivity of $A$ on its domain follows immediately from the coercivity bound of $a(\cdot, \cdot)$.
\end{proof}

Since $A$ is a strictly positive self-adjoint operator, its fractional power $A^{\frac12}$ is well-defined. Furthermore, its domain $D(A^{\frac12})$ coincides exactly with the space $V$ defined in \eqref{V_space}. We can now rigorously define the total state space $X$ as
\begin{equation}
    X = D(A^{\frac12}) \times H,
\end{equation}
endowed with the inner product
\begin{equation}  
    \left\langle \begin{bmatrix}U_1\\ \tilde U_1\end{bmatrix},\begin{bmatrix}U_2\\ \tilde U_2\end{bmatrix}\right\rangle_X =\left\langle A^{\frac12}U_1,A^{\frac12}U_2 \right\rangle_H + \left\langle \tilde U_1,\tilde U_2\right\rangle_H.
\end{equation}

By the properties of the associated bilinear form, the potential energy part of the inner product reduces directly to $\langle A^{\frac12} U, A^{\frac12} U \rangle_H = a(U,U)$. Thus, the total squared norm in $X$ for an element $z = [U, \tilde{U}]^T$ is explicitly given by:
\begin{equation}\label{norm_X}
\begin{aligned}
    \|z\|_X^2 &= \rho_{\rm f} g \|\Lambda_{\Omega}^{\frac12} v\|_{L^2(\mathcal{E})}^2 + \rho_{\rm f} \|v\|_{L^2(\mathcal{E})}^2 \\
    &\quad + g \langle {\bf K} {\bf q}, {\bf q} \rangle_{\mathbb{C}^3} + g \| \sqrt{EI} w_{yy} \|_{L^2(y_0,y_L)}^2 \\
    &\quad + \rho_{\rm f} \|\tilde{v}\|_{L^2(\mathcal{E})}^2 + g \langle {\bf M}_{\rm T} \tilde{\bf q}, \tilde{\bf q} \rangle_{\mathbb{C}^3} \\
    &\quad + g \|\sqrt{\rho}\tilde{w}\|_{L^2(y_0,y_L)}^2 + g m |\tilde{\xi}|^2 + g J |\tilde{\eta}|^2.
\end{aligned}
\end{equation}

We now define $\mathcal{A}_0$ as
\begin{gather}
    D(\mathcal{A}_0)=D(A)\times D(A^{\frac12})\\
    \mathcal{A}_0=\begin{bmatrix}
        0 & \mathbb{I}\\
        -A & 0
    \end{bmatrix}.
\end{gather}
By Proposition \ref{prop_A_self_adjoint}, the operator $\mathcal{A}_0$ is skew-adjoint on $X$ and generates a strongly continuous unitary group $(\mathbb{T}_t)_{t \in \mathbb{R}}$ on $X$ (see \cite[Proposition~3.7.6]{Obs_Book}).

Our goal now is to define a suitable coupling operator $\mathcal{P} \in \mathscr{L}(X)$ and set $\mathcal{A} = \mathcal{A}_0 + \mathcal{P}$, ensuring that the abstract first-order evolution equation
\begin{equation}
    \dot{z}(t) = \mathcal{A}z(t), \qquad (t \geqslant 0)
\end{equation}
provides a consistent representation of the original coupled system \eqref{v_q_w_coupled}, with the state vector defined as:
\begin{equation}
    z(t) = \begin{bmatrix} 
        \rule{0pt}{1.6em} [v(t,\cdot), \mathbf{q}(t), w(t,\cdot), w(t,y_L), w_y(t,y_L)]^T \\ 
        [v_t(t,\cdot), \dot{\mathbf{q}}(t), w_t(t,\cdot), w_t(t,y_L), w_{yt}(t,y_L)]^T \rule[-1.2em]{0pt}{0pt}
    \end{bmatrix}.
\end{equation}

To account for the physical interaction terms and remove the artificial identity term $\mathbb{I}$ introduced to ensure strict positivity, we define the perturbation operator $\mathcal{P}: X \to X$. Let $z = [U, \tilde U]^T \in X$, where $U = [v, \mathbf{q}, w, \xi, \eta]^T \in D(A^{\frac12})$ and $\tilde U = [\tilde{v}, \tilde{\mathbf{q}}, \tilde{w}, \tilde{\xi}, \tilde{\eta}]^T \in H$. We define $\mathcal{P}$ as the block operator:
\begin{equation}
    \mathcal{P} \begin{bmatrix} U \\ \tilde U \end{bmatrix} = \begin{bmatrix} \mathbf{0} \\ P \begin{bmatrix} U \\ \tilde U \end{bmatrix} \end{bmatrix},
\end{equation}
where $P: X \to H$ captures the remaining interaction forces. Additionally, we define the purely hydrodynamic acceleration of the platform as:
\begin{equation}
    \mathbf{a}_{\text{hyd}}[\tilde{v}] =-\mathbf{M}_{\rm T}^{-1}\left(  \rho_{\rm f} \int_{\Gamma_{\rm w}} (D_\Omega \tilde{v})\boldsymbol{\nu} \, {\rm d}\Gamma\right).
\end{equation}

To ensure sufficient regularity for the hydrodynamic interactions, we must now restrict our attention to platform geometries that do not induce strong singularities. The action of $P$ is defined entirely by the hydrodynamic interactions, as the structural couplings are fully captured by the domain of $\mathcal{A}_0$. The first component of $P$ accounts for the platform's velocity coupling and cancels the regularizing identity term $\mathbb{I}$ introduced in the definition of $A$.
\begin{equation}\label{P_formula}
    P \begin{bmatrix} U \\ \tilde U \end{bmatrix} = \begin{bmatrix} 
        v - g \,\tilde{\mathbf{q}} \cdot \Phi_y(\cdot, 0) \\
        \mathbf{a}_{\text{hyd}}[\tilde{v}] \\
        0 \\
        0 \\
        0
    \end{bmatrix}.
\end{equation}

\begin{proposition}\label{prop_P_bounded}
    The operator $\mathcal{P}$ is bounded on $X$, i.e., $\mathcal{P} \in \mathscr{L}(X)$.
\end{proposition}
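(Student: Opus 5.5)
Since $\mathcal{P}$ has zero first block, it suffices to prove $P\in\mathscr{L}(X,H)$, i.e.\ to bound each nonzero component of $P[U,\tilde U]^T$ in the corresponding factor of $H$ by $\|[U,\tilde U]^T\|_X$. From \eqref{norm_X}, $\|z\|_X$ controls $\|v\|_{L^2(\mathcal{E})}$ (indeed $\|v\|_{H^{\frac12}(\mathcal{E})}$), $|\tilde{\bf q}|$ (as ${\bf M}_{\rm T}$ is positive definite) and $\|\tilde v\|_{L^2(\mathcal{E})}$. Only these three quantities enter $P$, so the proof reduces to three estimates, handled one term at a time.

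First, the term $v$ in the first component satisfies $\|v\|_{L^2(\mathcal{E})}\leqslant \rho_{\rm f}^{-1/2}\|z\|_X$, which is immediate. Second, for $g\,\tilde{\bf q}\cdot\Phi_y(\cdot,0)$, I will show $\partial_y\phi_i(\cdot,0)\in L^2(\mathcal{E})$ for $i=1,2,3$. Once that holds, $\|g\tilde{\bf q}\cdot\Phi_y(\cdot,0)\|_{L^2}\leqslant g|\tilde{\bf q}|\,\|\Phi_y(\cdot,0)\|_{L^2(\mathcal{E})^3}$. To prove the regularity, I will write $\phi_i=\psi_i-D_\Omega(\psi_i|_{\mathcal{E}})$ with $\psi_i$ a smooth, compactly supported lifting of the Neumann data $\nu_i$ near $\Gamma_{\rm w}$. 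Alternatively, I can use the regularity theory from \cite{lannes2025posednessfjohnsfloating} underlying Proposition~\ref{prop:DtN_properties}. Away from the contact points the normal derivative on $\{y=0\}$ is smooth, and at infinity $\phi_i$ decays exponentially in the flat strip. This is precisely where the standing restriction to ``platform geometries that do not induce strong singularities'' enters. The corner singularities at the contact points, where the angle lies in $(0,\pi)$, behave like $r^{\pi/\alpha-1}$ for the gradient, which is $L^2$ near the corner. I expect this step to be the main obstacle, and I would state it as a consequence of the geometric assumption.

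Third, for ${\bf a}_{\rm hyd}[\tilde v]$, I use that $D_\Omega$ extends to $L^2(\mathcal{E})$ data with a trace on $\Gamma_{\rm w}$ in $L^2$. The cleanest route is duality via Green's identity. Since both functions are harmonic, with $\partial_{\bf n}\phi_i=\nu_i$ on $\Gamma_{\rm w}$, $\phi_i=0$ on $\mathcal{E}$, and $\partial_{\bf n}D_\Omega\tilde v=0$ on $\Gamma_{\rm w}\cup\Gamma_s$, we get
\begin{equation*}
\int_{\Gamma_{\rm w}}(D_\Omega\tilde v)\,\nu_i\,{\rm d}\Gamma=\int_{\mathcal{E}}\tilde v\,\partial_y\phi_i(\cdot,0)\,{\rm d}x .
\end{equation*}
This identity holds first for $\tilde v\in H^1(\mathcal{E})$. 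The sign is fixed by orienting ${\bf n}$ outward, so that ${\bf n}=(0,1)$ on $\mathcal{E}$. Hence $|{\bf a}_{\rm hyd}[\tilde v]|\leqslant \rho_{\rm f}\|{\bf M}_{\rm T}^{-1}\|\,\|\Phi_y(\cdot,0)\|_{L^2}\|\tilde v\|_{L^2}$. By density of $H^1(\mathcal{E})$ in $L^2(\mathcal{E})$, this both defines ${\bf a}_{\rm hyd}$ on $L^2$ and bounds it, so the same $L^2$ regularity of $\Phi_y(\cdot,0)$ from the second step does double duty.

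Combining the three estimates with the equivalence of the $H$-norm weights gives $\|P z\|_H\leqslant C\|z\|_X$. Therefore $\|\mathcal{P}z\|_X=\|Pz\|_H\leqslant C\|z\|_X$, which proves $\mathcal{P}\in\mathscr{L}(X)$.
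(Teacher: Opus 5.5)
This is essentially the paper's proof, step for step: the trivial bound on $v$, the regularity $\Phi_y(\cdot,0)\in L^2(\mathcal{E})^3$ taken from the corner-angle hypothesis, and Green's second identity with the Kirchhoff potentials to rewrite $\mathbf{a}_{\text{hyd}}[\tilde v]$ as a pairing of $\tilde v$ with $\Phi_y(\cdot,0)$ on $\mathcal{E}$. Your density step, proving the identity for $\tilde v\in H^1(\mathcal{E})$ and extending by continuity, is a genuine improvement. The paper applies $D_\Omega$ to $\tilde v\in L^2(\mathcal{E})$ without comment, although Proposition~\ref{prop:DtN_properties} only defines it on $H^1(\mathcal{E})$.

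There are three slips. None of them affects the final bound.

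\begin{enumerate}
\item With $\mathbf{n}$ outward, so $\mathbf{n}=(0,1)$ on $\mathcal{E}$, the identity is $\int_{\Gamma_{\rm w}}(D_\Omega\tilde v)\nu_i\,{\rm d}\Gamma=-\int_{\mathcal{E}}\tilde v\,\partial_y\phi_i(\cdot,0)\,{\rm d}x$, as in the paper. Your $+$ sign is wrong. It does not matter for boundedness, but it does matter for the cancellation in the energy identity.
\item At a mixed Dirichlet--Neumann corner of opening $\alpha$, the leading singular function is $r^{\pi/(2\alpha)}$. The gradient therefore behaves like $r^{\pi/(2\alpha)-1}$, not $r^{\pi/\alpha-1}$. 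Square-integrability of this along $\mathcal{E}$, i.e.\ $\int_0^1 r^{\pi/\alpha-2}\,{\rm d}r<\infty$, forces exactly $\alpha<\pi$, and that is why the angle hypothesis is needed. With your exponent only $\alpha<2\pi$ would be required.
\item The decomposition $\phi_i=\psi_i-D_\Omega(\psi_i|_{\mathcal{E}})$ is harmonic only if $\psi_i$ is. A smooth compactly supported lifting therefore does not work as stated, and on a Lipschitz $\Gamma_{\rm w}$ an exact smooth Neumann lifting is not available anyway. Your fallback of citing the corner regularity theory, as the paper does, is the right route.
\end{enumerate}
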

\begin{proof}
    Let $z = [U, \tilde{U}]^T \in X$. We must show that $\|\mathcal{P}z\|_X \leqslant C \|z\|_X$ for some constant $C>0$. By the definition of the norm in $X$, $\|\mathcal{P}z\|_X = \|P z\|_H$. We estimate the non-zero components of $P z$ given in \eqref{P_formula}. 

Regarding the fluid component, we must evaluate the regularity of the Kirchhoff potentials $\phi_i$ near the contact points to ensure their normal derivatives admit an $L^2$ trace on the free surface. As established in \cite[Proposition 3.6]{lannes2025fjohnmodelcummins}, since the interior angles are strictly between $0$ and $\pi$, the normal derivatives of the potentials are locally square-integrable on the free surface (see also \cite{Dauge1988}). Combined with the global $L^2$-regularity of the gradient on Lipschitz boundaries away from the corners \cite[Theorem 2.1]{Brown1994}, we conclude that $\Phi_y(\cdot, 0) \in (L^2(\mathcal{E}))^3$.
    
    Therefore, the first component of $\mathcal{P}z$ is bounded by
    \begin{equation}
        \|v - g \,\tilde{\mathbf{q}} \cdot \Phi_y(\cdot, 0)\|_{L^2(\mathcal{E})} \leqslant \|v\|_{L^2(\mathcal{E})} + g |\tilde{\mathbf{q}}|\,\|\Phi_y(\cdot,0)\|_{L^2(\mathcal{E})^3} \leqslant C_1 \|z\|_X,
    \end{equation}
    where we used that the $L^2$-norm of $v$ and the vector norm of $\tilde{\mathbf{q}}$ are continuously controlled by the $X$-norm of $z$.

    Second, for the platform's hydrodynamic acceleration $\mathbf{a}_{\text{hyd}}[\tilde{v}]$, we assess the integral $\int_{\Gamma_{\rm w}} (D_\Omega \tilde{v}) \nu_i \, {\rm d}\Gamma$. Applying Green's second identity between the harmonic functions $D_\Omega \tilde{v}$ and the Kirchhoff potentials $\phi_i$, and exploiting the homogeneous boundary conditions of $\phi_i$ on $\mathcal{E}$ and the normal derivative of $D_\Omega \tilde{v}$ on $\Gamma_{\rm w}$, we obtain:
    \begin{equation}
        \int_{\Gamma_{\rm w}} (D_\Omega \tilde{v}) \nu_i \, {\rm d}\Gamma = -\int_{\mathcal{E}} \tilde{v}(x) \frac{\partial \phi_i}{\partial y}(x, 0) \, {\rm d}x.
    \end{equation}
    Since $(\phi_i)_y(\cdot, 0) \in L^2(\mathcal{E})$, the Cauchy-Schwarz inequality yields:
    \begin{equation}
        |\mathbf{a}_{\text{hyd}}[\tilde{v}]| \leqslant C_2 \|\tilde{v}\|_{L^2(\mathcal{E})} \|\Phi_y(\cdot, 0)\|_{L^2(\mathcal{E})^3} \leqslant C_3 \|\tilde{v}\|_{L^2(\mathcal{E})} \leqslant C_4 \|z\|_X.
    \end{equation}
    Combining these estimates, we conclude that $\|P z\|_H \leqslant C \|z\|_X$, so that $\mathcal{P} \in \mathscr{L}(X)$.
\end{proof}

\section{Proof of the Main Results}\label{sec:proof_well-posendess}
We are now in a position to prove our first main result, Theorem \ref{well-posedness_v_q_w}.

\begin{proof}[Proof of Theorem \ref{well-posedness_v_q_w}]
    Let us define the state variable 
    \begin{equation}
        z(t) = \begin{bmatrix} 
            \rule{0pt}{1.6em} [v(t,\cdot), \mathbf{q}(t), w(t,\cdot), w(t,y_L), w_y(t,y_L)]^T \\ 
            [v_t(t,\cdot), \dot{\mathbf{q}}(t), w_t(t,\cdot), w_t(t,y_L), w_{yt}(t,y_L)]^T \rule[-1.2em]{0pt}{0pt}
        \end{bmatrix}.
    \end{equation}
    By combining Proposition \ref{prop_A_self_adjoint} and Proposition \ref{prop_P_bounded}, standard bounded perturbation theory implies that the full operator $\mathcal{A} = \mathcal{A}_0 + \mathcal{P}$ generates a strongly continuous group $(\mathbb{T}_t^{\mathcal{P}})_{t\in\mathbb{R}}$ on the energy space $X$ (see \cite[Theorem~2.11.2]{Obs_Book}). Next, let us define the source term
    \begin{equation}
\mathbf{f}(t)= \big[0, {\bf 0}, 0, 0, 0, \ 0, \mathbf{M}_{\rm T}^{-1}{\bf F}_{\rm ext}(t), 0, m^{-1}F_{\rm tip}(t), J^{-1}M_{\rm tip}(t)\big]^T \qquad (t \geqslant 0)
    \end{equation}
    and consider the abstract Cauchy problem
    \begin{equation}\label{abstract_first_order}
        \left\{
        \begin{aligned}
            &\dot z(t) = \mathcal{A}z(t) + \mathbf{f}(t) \qquad (t \geqslant 0)\\
            &z(0) = z_0
        \end{aligned}
        \right..
    \end{equation}

    We first prove the existence of a strong (classical) solution. The higher regularity and the full set of geometric compatibility conditions \eqref{strong_geometric_cond} imposed on the initial data $[v_0, \mathbf{q}_0, w_0]^T$ and $[v_1, \mathbf{q}_1, w_1]^T$ in the theorem statement correspond precisely to the requirement that the initial state $z_0 = z(0)$ belongs to the operator domain $D(\mathcal{A})$. Since ${\bf F}_{\rm ext}\in C^1([0,\infty);\mathbb{C}^3)$ and $F_{\rm tip}, M_{\rm tip} \in C^1([0,\infty); \mathbb{C})$, we have $\mathbf{f} \in C^1([0,\infty); X)$. Standard semigroup theory (see, e.g., \cite{Pazy1983,Obs_Book}) ensures that under these conditions, the problem \eqref{abstract_first_order} admits a unique strong solution $z \in C([0,\infty); D(\mathcal{A})) \cap C^1([0,\infty); X)$ given by 
    
     \begin{equation}\label{semigroup_formula}
        z(t) = \mathbb{T}_t^{\mathcal{P}}z_0 + \int_0^t \mathbb{T}_{t-\tau}^{\mathcal{P}}\mathbf{f}(\tau) \, {\rm d}\tau\qquad\qquad(t\geqslant 0).
    \end{equation}
    
    Translating this regularity back to the individual physical variables directly yields the solution spaces specified in \eqref{strong_solution_spaces}.

   Finally, we address the lower regularity case. The relaxed initial conditions and the single geometric constraint \eqref{weak_geometric_cond} correspond to the requirement that $z_0$ belongs merely to the energy space $X$. For ${\bf F}_{\rm ext} \in L_{\rm loc}^1([0,\infty); \mathbb{C}^3)$ and $F_{\rm tip}, M_{\rm tip} \in L^1_{\rm loc}([0,\infty); \mathbb{C})$, we have $\mathbf{f} \in L_{\rm loc}^1([0,\infty); X)$. In this scenario, \eqref{abstract_first_order} possesses a unique weak (mild, i.e., of the form \eqref{semigroup_formula}) solution $z \in C([0,\infty); X)$. Mapping this regularity back to the physical variables yields the weak solution spaces stated in \eqref{weak_solution_spaces}.
\end{proof}

In standard semigroup theory, the energy conservation of a system is typically an automatic consequence of the skew-adjointness of its generator. However, in our framework, the skew-adjoint operator $\mathcal{A}_0$ constructed in Section \ref{sec:operator_form} does not represent the physical energy of the target system \eqref{v_q_w_coupled}. Instead, it corresponds to a modified model that omits specific hydrodynamic interaction terms and artificially includes the identity operator $\mathbb{I}$ on the fluid component to ensure strict positivity. As a result, the full generator $\mathcal{A} = \mathcal{A}_0 + \mathcal{P}$ is not skew-adjoint on $X$, and the abstract norm in $X$ does not capture the exact physical energy. 
    
To obtain a direct physical energy balance via the skew-adjointness of the exact operator, one would need to formulate the fluid problem using the homogeneous Beppo-Levi space $\dot{H}^1(\Omega)$ for the interior potential and its associated semi-normed trace space on the free surface, in order to handle the lack of coercivity over constant modes (see \cite{lannes2025posednessfjohnsfloating,lannes2025fjohnmodelcummins}). We preferred an approach relying on familiar, standard Sobolev spaces. While this deliberate choice requires us to verify the exact physical energy balance by hand, the well-posedness result established in Theorem \ref{well-posedness_v_q_w} rigorously guarantees that all norms and traces appearing in the physical energy $E(t)$ are well-defined and finite for the corresponding initial conditions.

\begin{proof}[Proof of Theorem \ref{thm:energy}]
Assume first that the initial data are in $D(\mathcal{A})$ and ${\bf F}_{\rm ext}\in C^1([0,\infty);\mathbb{C}^3)$, $F_{\rm tip}, M_{\rm tip} \in C^1([0,\infty); \mathbb{C})$ so that $(v, {\bf q}, w)$ is a classical solution satisfying \eqref{strong_solution_spaces}. For simplicity of notation, we omit the time dependence of the terms.
    
    Applying the operation $\frac{\rho_{\rm f}}{g}\mathrm{Re}\langle \cdot, v_t\rangle_{L^2(\mathcal{E})}$ to the fluid equation, we obtain
    \begin{equation}\label{energy_fluid_proof}
        \frac{{\rm d}}{{\rm d}t}\left( \frac{\rho_{\rm f}}{2g} \|v_t\|_{L^2(\mathcal{E})}^2 + \frac12 \rho_{\rm f} \|\Lambda_\Omega^{\frac12}v\|_{L^2(\mathcal{E})}^2\right) = -\rho_{\rm f} \mathrm{Re} \langle \dot{\bf q}\cdot \Phi_y(\cdot,0),v_t\rangle_{L^2(\mathcal{E})}.
    \end{equation}
    
    Next, taking the inner product of the platform equation with $\dot{\bf q}$ in $\mathbb{C}^3$ and extracting the real part yields
    \begin{equation}\label{energy_platform_proof}
    \begin{aligned}
        \frac{{\rm d}}{{\rm d}t}\left(\frac12 \langle {\bf M}_{\rm T}\dot{\bf q},\dot{\bf q}\rangle_{\mathbb{C}^3} + \frac12 \langle {\bf K}{\bf q},{\bf q}\rangle_{\mathbb{C}^3}\right) &= - \rho_{\rm f} \mathrm{Re} \left\langle \int_{\Gamma_{\rm w}} (D_\Omega v_t)\boldsymbol{\nu}\,{\rm d}\Gamma, \dot{\bf q} \right\rangle_{\mathbb{C}^3} \\
        &\quad - \mathrm{Re} \langle {\bf F}_{\rm int}[w], \dot{\bf q}\rangle_{\mathbb{C}^3} + \mathrm{Re} \langle {\bf F}_{\rm ext}, \dot{\bf q}\rangle_{\mathbb{C}^3}.
    \end{aligned}
    \end{equation}
    Applying Green's identity to the hydrodynamic pressure term and exploiting the homogeneous boundary conditions of the Kirchhoff potentials $\phi_i$ on $\mathcal{E}$ and the normal derivative of $D_\Omega v_t$ on $\Gamma_{\rm w}$, we have
    \begin{equation}
      \int_{\Gamma_{\rm w}} (D_\Omega v_t)\nu_i \,{\rm d}\Gamma = -\int_{\mathcal{E}} v_t \frac{\partial \phi_i}{\partial y}(\cdot,0) \,{\rm d}x.  
    \end{equation}
    Thus, the first term on the right-hand side of \eqref{energy_platform_proof} becomes precisely 
    \begin{equation}
     \rho_{\rm f} \mathrm{Re} \langle \dot{\bf q}\cdot \Phi_y(\cdot,0), v_t \rangle_{L^2(\mathcal{E})},   
    \end{equation}
    which perfectly cancels the corresponding term on the right-hand side of \eqref{energy_fluid_proof} when the two equations are added.

    Finally, we take the inner product of the beam equation with $w_t$ in $L^2(y_0,y_L)$. Taking the real part and integrating the bending stiffness term by parts twice yields boundary terms at both $y=y_L$ and $y=y_0$. By substituting the dynamic boundary equations governing the tip point-mass at $y_L$, those terms are absorbed into the time derivative, yielding:
    \begin{equation}\label{energy_beam_proof}
    \begin{aligned}
        &\frac{{\rm d}}{{\rm d}t}\left(\frac12 \|\sqrt{\rho}w_t\|_{L^2(y_0,y_L)}^2 + \frac12 \|\sqrt{EI}w_{yy}\|_{L^2(y_0,y_L)}^2 + \frac12 m|w_t(y_L)|^2 + \frac12 J|w_{yt}(y_L)|^2\right) \\
        &\qquad =  \mathrm{Re} \left[ (EI w_{yy})_y(y_0) \bar{w}_t(y_0) - (EI w_{yy})(y_0) \bar{w}_{yt}(y_0) \right] + \mathrm{Re} \big[ F_{\rm tip} \bar{w}_t(y_L) + M_{\rm tip} \bar{w}_{yt}(y_L) \big].
    \end{aligned}
    \end{equation}
    Differentiating the kinematic clamping conditions at the base with respect to time, we obtain $w_t(y_0) = \mathbf{r}(y_0)\cdot \dot{\bf q}$ and $w_{yt}(y_0) = \mathbf{r}'(y_0)\cdot \dot{\bf q}$. By inserting these relations and expanding ${\bf q} = [s, h, \theta]^T$, the right-hand side of \eqref{energy_beam_proof} matches exactly with $\mathrm{Re} \langle {\bf F}_{\rm int}[w], \dot{\bf q} \rangle_{\mathbb{C}^3}$. This perfectly cancels the remaining internal structural interaction term in \eqref{energy_platform_proof}. 

    Upon summing equations \eqref{energy_fluid_proof}, \eqref{energy_platform_proof}, and \eqref{energy_beam_proof}, all internal coupling terms vanish, leaving the differential balance equation
    \begin{equation}
        \frac{{\rm d}}{{\rm d}t}E(t) = \mathrm{Re} \langle {\bf F}_{\rm ext}(t), \dot{\mathbf{q}}(t) \rangle_{\mathbb{C}^3} + \mathrm{Re} \big[ F_{\rm tip}(t) \bar{w}_t(t, y_L) + M_{\rm tip}(t) \bar{w}_{yt}(t, y_L) \big].
    \end{equation}
    Integrating this relation from $0$ to $t$ yields the integral energy balance identity \eqref{energy_balance_formula} for classical solutions.

Consider now a mild solution $z \in C([0, \infty); X)$ associated with initial data $z_0 \in X$. Since $D(\mathcal{A})$ is dense in $X$, this solution is characterized as the strong limit in $C([0, \infty); X)$ of a sequence of classical solutions. Given that the physical energy functional $E(t)$ is continuously controlled by the $X$-norm topology, the integral energy identity \eqref{energy_balance_formula} extends to all mild solutions by a standard density argument.
\end{proof}
\section*{Acknowledgements}
This work is funded by the European Union (Horizon Europe MSCA project ModConFlex, grant number 101073558).

\end{document}